\DeclareMathOperator{\spn}{span}
\def\F{\mathbb{F}}
\def\su{{\subseteq}}
\def\dl{{\rm dl}}
\newtheorem{defi}{Definition}[section]
\newtheorem{thm}[defi]{Theorem}
\newtheorem{lem}[defi]{Lemma}
\newtheorem{cor}[defi]{Corollary}
\newtheorem{rem}[defi]{Remark}
\crefname{thm}{Theorem}{Theorems}
\crefname{lem}{Lemma}{Lemmas}
\crefname{cor}{Corollary}{Corollaries}
\crefname{rem}{Remark}{Remarks}
\crefname{section}{Section}{Sections}
\crefname{equation}{\unskip}{\unskip}
\crefname{enumi}{\unskip}{\unskip}
\newcommand{\ch}[1]{\mathrm{char}{(#1)}}
\newcommand{\U}{\mathcal{U}}
\begin{document}
	
	\title[Identities and derived lengths of finitary incidence algebras and their group of units]{Identities and derived lengths of finitary incidence algebras and their group of units}
	
	\author{Mykola Khrypchenko}
	\address{Departamento de Matem\'atica, Universidade Federal de Santa Catarina,  Campus Reitor Jo\~ao David Ferreira Lima, Florian\'opolis, SC, CEP: 88040--900, Brazil \and CMUP, Departamento de Matemática, Faculdade de Ciências, Universidade do Porto,
		Rua do Campo Alegre s/n, 4169--007 Porto, Portugal}
	\email{nskhripchenko@gmail.com}
	
	\author{\textsc{Salvatore Siciliano}}
	\address{Dipartimento di Matematica e Fisica ``Ennio De Giorgi", Universit\`{a} del Salento,
		Via Provinciale Lecce--Arnesano, 73100--Lecce, Italy}
	\email{salvatore.siciliano@unisalento.it}

	\begin{abstract} Let $FI(X,K)$ be the finitary incidence algebra of a poset $X$ over  a field $K$. In this short note we establish when $FI(X,K)$ satisfies a polynomial identity and when its group of units $\U(FI(X,K))$ satisfies a group identity. The Lie derived length of $FI(X,K)$ and the derived length of $\U(FI(X,K))$ are also determined. 
	\end{abstract}

	\subjclass[2010]{16S50; 16R10; 17B60.}
	\keywords{Finitary incidence algebra; polynomial identity; Lie solvable; derived length.}
	\date{\today}
	
	\maketitle

	\section*{Introduction}  
	Let $A$ be a unital associative algebra. Then  $A$ can be regarded as a Lie algebra via the Lie bracket defined by $[x,y]=xy-yx$, for all $x,y \in A$. Given two subspaces $U,V\su A$, we put $UV:=\spn\{uv\mid u\in U,v\in V\}$ and $[U,V]:=\spn\{[u,v]\mid u\in U,v\in V\}$.
	The \textit{Lie derived series} of $A$ is defined inductively by 
	\begin{align*}
		A^{[0]}=A\text{ and }A^{[n+1]}=\left[A^{[n]},A^{[n]}\right],\ n\ge 0.  
	\end{align*}
	Moreover, inspired by \cite{J}, we consider the series of associative two-sided ideals of $A$ defined by  
	\begin{align*}
		A^{(0)}=A\text{ and }A^{(n+1)}=\left[A^{(n)},A^{(n)}\right]A,\ n\ge 0. 
	\end{align*}
	The algebra $A$ is said to be \emph{Lie solvable} (respectively, \emph{strongly Lie solvable}) if $A^{[n]}=\{0\}$ (respectively, $A^{(n)}=\{0\}$) for some $n$. In this case,  the minimal $n$ with such a property is called the \emph{Lie derived length} (respectively, \emph{strong Lie derived length}) of $A$ and denoted by $\dl_{Lie}(A)$ (respectively, $\dl^{Lie}(A)$).
	Clearly, strong Lie solvability implies  Lie solvability of $A$ (and $\dl_{Lie}(A)\leq \dl^{Lie}(A)$), but the converse is not true in general. For instance,  the algebra $M_2(K)$ of $2 \times 2$ matrices over $K$ is Lie solvable but not strongly Lie solvable when $\ch K=2$.
	
	The terms of the \emph{descending  central series} of $A$ are defined inductively by 
	\begin{align*}
		\gamma_1(A)=A\text{ and }\gamma_{n+1}(A)=[\gamma_n(A), A],\ n\geq 1.    
	\end{align*}
	We say that $A$ is \emph{Lie nilpotent} if there is an $n$ such that $\gamma_n(A)=\{0\}$, in which case the minimal $n$ such that $\gamma_{n+1}(A)=\{0\}$ is called the \emph{Lie nilpotency class} of $A$.
	
	The systematic study of the influence of the properties of the associated Lie ring $(R,[\ ,\ ])$ on the structure of a ring $(R,\cdot)$   was initiated by Jennings~\cite{J}. 
	Nowadays, there is an extensive literature devoted to the study of the Lie identities of several classes of associative algebras and to the interplay with the corresponding properties of their group of units.  Passi, Passman and Sehgal~\cite{PPS} completely characterized Lie nilpotency and Lie solvability of the group ring $K[G]$ of a group $G$ over a field $K$ of characteristic $p\ge 0$. 
	Riley and Shalev~\cite{RS} studied Lie properties (Lie nilpotency, Lie solvability and the $n$-Engel property) of the (restricted) enveloping algebra $u(L)$ of a restricted Lie algebra $L$ over a field of positive characteristic $p$ different from $2$. The case $p=2$ was treated by Siciliano and Usefi in~\cite{SU1} (see also the survey paper~\cite{SU2}). Lie properties of Poisson algebras were investigated both in the general setting~\cite{SU3}, and in the context of the symmetric Poisson algebra of a Lie algebra~\cite{MP,S,SU4}. There is a relation between Lie nilpotency~\cite{D} and Lie solvability~\cite{AS} of a radical ring $R$ and the corresponding properties of its adjoint group $(R,\circ)$. 
	
	Clearly, any Lie solvable associative algebra satisfies a polynomial identity. Passman~\cite{P1} characterized those Lie algebras whose enveloping algebra (universal or restricted) satisfies a polynomial identity. Giambruno, Sehgal and Valenti~\cite{GSV} proved that the group algebra $K[G]$ of a torsion group $G$ over an infinite field $K$ satisfies a polynomial identity whenever its group of units $U(K[G])$ satisfies a group identity. Passman~\cite{P2} complemented the previous result by describing those $G$ such that $U(K[G])$ satisfies a group identity. The results were generalized to arbitrary fields in \cite{Liu99,LP}.
	
	The purpose of this note is to characterize finitary incidence algebras satisfying a polynomial identity. Recall~\cite{KN} that the \emph{finitary incidence algebra} $FI(X,K)$ of a poset $(X,\leq)$ over a field $K$ is the vector space of formal sums of the form
	$$
	\alpha = \sum_{x\leq y} \alpha_{xy}e_{xy},
	$$
	where $x,y\in X$, $\alpha_{xy}\in K$ and $e_{xy}$ is a symbol, such that for any pair $x<y$ there exists
	only a finite number of $x \leq u <v \leq y$ with $\alpha_{uv}\neq 0$. The product in
	$FI(X,K)$ is given by the convolution
	\begin{align}\label{conv-product}
		\alpha\beta = \sum_{x\leq y}\left(\sum_{x\leq z \leq y}\alpha_{xz}\beta_{zy} \right)e_{xy},
	\end{align}
	so that $FI(X,K)$ is associative and has identity element $\delta:=\sum_{x\in X}e_{xx}$. The algebra $FI(X,K)$ coincides with the classical incidence algebra $I(X,K)$ whenever $X$ is locally finite. Incidence algebras were introduced by Rota~\cite{Rota64} and have become an important tool in combinatorics~\cite{Doubilet-Rota-Stanley72,Stanley} and combinatorial topology~\cite{Orlik-Solomon80,Wachs2007}. Recently, special attention has been paid to the associated Lie algebra~\cite{Coll-Mayers-Russoniello20,Coll-Mayers21} and linear maps on it~\cite{FKS3,FKS,FKS2,FKS4,Zhang-Khrypchenko}.

	Feinberg~\cite{F} proved that the incidence algebra of a locally finite quasiordered set $X$ over a field satisfies a polynomial identity if and only of $X$ is bounded. Nachev~\cite{Nachev77} independently obtained a similar result for incidence algebras over commutative rings. Leroux and Sarreill\'e~\cite{Leroux-Sarreille81} characterized path algebras satisfying a polynomial identity. Spiegel~\cite{Spiegel99} described linear homogeneous identities of $I(X,K)$ of minimal degree. Moreover, in~\cite{Sp} he proved that $I(X,K)$ satisfies a polynomial identity if and only if $I(X,K)$ is Lie solvable, and in~\cite{Spiegel2002} he showed that this is also equivalent to the facts that the group of units $\U(I(X,K))$ of $I(X,K)$ satisfies a group identity and that $\U(I(X,K))$ is solvable. On the other hand, nilpotency and solvability of the group $\U_1=\delta+J(FI(X,K))$ were characterized by Dugas, Herden and Rebrovich~\cite{DHR}.
	
	The main result of our note is \cref{teor} which generalizes the results of \cite{F,Nachev77,Sp,Spiegel2002}. As a consequence, we calculate the Lie derived length of $FI(X,K)$ in \cref{cor1}, and the derived length of its group of units $\U(FI(X,K))$ in \cref{cor1,dl(U)-for-K=F_2}. Moreover, in \cref{cornilp} we recover \cite[Theorem 41]{DHR} complementing it with the exact values of the nilpotency class and derived length of $\U_1$. Finally, in \cref{nilpotency-of-FI} we characterize nilpotency of $FI(X,K)$ and $\U(FI(X,K))$.


	\section{Identities and derived lengths of $FI(X,K)$ and $\U(FI(X,K))$} 
	
	Given an associative unital algebra $A$, we write $\U(A)$ for the group of units of $A$ and $J(A)$ for the Jacobson radical of $A$. Given a group $G$, we denote by $G^{(n)}$ the $n$-th term  of the derived series of $G$. If $G$ is solvable, then $\dl (G)$ denotes its derived length. Group commutators are denoted by $(g,h)=g^{-1}h^{-1}gh$, for all $g,h\in G$.
	
	\begin{lem}\label{dl(G)-and-dl^Lie(I)}
		Let $A$ be an associative unital algebra, $I$ an ideal of $A$ and $G$ a subgroup of $\U(A)$ contained in $1+I$. Then $G^{(n)}\su 1+I^{(n)}$. In particular, if $I$ is strongly solvable, then $G$ is solvable and $\dl(G) \leq \dl^{Lie}(I)$.
	\end{lem}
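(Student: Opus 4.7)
I plan to prove the stronger inclusion $G^{(n)} \subseteq 1 + I^{(n)}$ by induction on $n$, from which the ``in particular'' assertion follows immediately: taking $m = \dl^{Lie}(I)$ yields $G^{(m)} \subseteq 1 + I^{(m)} = \{1\}$, so $\dl(G) \le m$.

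The base case $n = 0$ is the hypothesis $G = G^{(0)} \subseteq 1 + I = 1 + I^{(0)}$. For the inductive step, I fix $g, h \in G^{(n)}$; since $G^{(n)}$ is a subgroup of $\U(A)$, the \emph{inverses} $g^{-1}, h^{-1}$ also lie in $G^{(n)} \subseteq 1 + I^{(n)}$, so I may write $g^{-1} = 1 + a'$ and $h^{-1} = 1 + b'$ with $a', b' \in I^{(n)}$. The crucial step is to recognise the identity
$$(g,h) - 1 = g^{-1}h^{-1}gh - h^{-1}g^{-1}gh = [g^{-1}, h^{-1}]\, gh = [a', b']\, gh,$$
which, thanks to $[a', b'] \in [I^{(n)}, I^{(n)}]$ and $gh \in A$, places $(g,h) - 1$ directly in $[I^{(n)}, I^{(n)}] \cdot A = I^{(n+1)}$. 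Hence every basic commutator sits in $1 + I^{(n+1)}$. To upgrade this to the whole derived subgroup $G^{(n+1)}$, I would verify that $(1 + I^{(n+1)}) \cap \U(A)$ is a subgroup: closure under products uses $(1+u)(1+v) = 1 + (u + v + uv) \in 1 + I^{(n+1)}$, and closure under inversion uses $(1+u)^{-1} = 1 - u(1+u)^{-1} \in 1 + I^{(n+1)}$, both relying only on the fact that $I^{(n+1)}$ is a right ideal of $A$, which is built into its definition.

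The one subtle point, and the only step I expect to require care, is the choice of commutator identity. The familiar form $(g,h) - 1 = g^{-1}h^{-1}[g,h]$ places the bracket on the \emph{right} and yields membership only in $A \cdot [I^{(n)}, I^{(n)}]$, whose containment in $[I^{(n)}, I^{(n)}]\,A$ is not formal and would call for an auxiliary argument that every $I^{(n)}$ is two-sided (via a Leibniz-type identity such as $a[x,y] = [[a,x],y] + [x,[a,y]] + [x,y]\,a$). Exploiting instead the trivial relation $1 = h^{-1}g^{-1}gh$ to factor the commutator on the \emph{left} sidesteps this obstacle and keeps the proof as short as the statement promises.
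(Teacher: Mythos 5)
Your proof is correct and follows essentially the same route as the paper's: induction on $n$ together with the identity $(g,h)=1+[g^{-1},h^{-1}]gh$, where $[g^{-1},h^{-1}]=[a',b']\in[I^{(n)},I^{(n)}]$, followed by the observation that $1+I^{(n+1)}$ is closed under multiplication (the paper simply says it is a monoid, which already suffices since inverses of commutators are commutators, so your inversion check is harmless extra work).
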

	\begin{proof}
		Induction on $n$. The base case $n=0$ is trivial. Assume then $n>0$ and $G^{(n-1)}\subseteq 1+I^{(n-1)}$. For all $g,h \in G^{(n-1)}$ we have $(g,h)=1+[g^{-1},h^{-1}]gh\in 1+I^{(n)}$. Since $1+I^{(n)}$ is a monoid, this yields the desired conclusion, which completes the proof. 
	\end{proof}
	
	Let $(X,\le)$ be a poset. The \emph{length} of a finite chain $C\su X$ is $l(C):=|C|-1$. The \emph{length} of $X$, denoted $l(X)$, is the supremum of $l(C)$ for all finite chains $C\su X$. We say that $X$ is \emph{bounded} if $l(X)<\infty$.
	Given $x<y$ in $X$, the length of the interval $\{z\in X \vert \, x\leq z \leq y\}$ will be denoted by $l(x,y)$.
	
	
	For every $k>0$, put
	$$
	Z_k:=\{\alpha\in FI(X,K) \mid \alpha_{xy}=0 \textrm{ if }l(x,y)\leq k-1\}.
	$$
	
	It turns out that the family $(Z_k)_{k>0}$ is a descending filtration of $FI(X,K)$. In particular, $Z_k$ is an ideal of $FI(X,K)$ for every $k>0$: 
	
	\begin{lem}\label{ideals-Z_k} Let $(X, \leq)$ be a poset and $K$ a field. In the notation above, we have:
		\begin{enumerate}
			\item\label{FI^(1)-sst-Z_1} $FI(X,K)^{(1)}\subseteq Z_1=J(FI(X,K))$;
			\item \label{Z_mZ_n-sst-Z_(m+n)} $Z_m\cdot Z_n\subseteq Z_{m+n}$ for all $m,n>0$;
			\item\label{gamma_n(J(FI))}  $\gamma_{n}(J(FI(X,K))) \subseteq Z_n$ for all $n>0$;
			\item\label{FI^(n)-sst-Z_(2^(n-1))}   $FI(X,K)^{(n)}\subseteq Z_{2^{n-1}}$ for all $n>0$;
			\item\label{J^(n)-sst-Z_(2^n)} $J(FI(X,K))^{(n)}\su Z_{2^n}$ for all $n\ge 0$.
		\end{enumerate}
	\end{lem}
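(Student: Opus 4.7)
The plan is to first establish part~(ii), which is the multiplicative heart of the lemma, and then to use it to deduce (i) and (iii)--(v) by short inductions.

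For~(ii), I would plug into the convolution formula~\eqref{conv-product}: for $\alpha\in Z_m$ and $\beta\in Z_n$, a summand $\alpha_{xz}\beta_{zy}$ of $(\alpha\beta)_{xy}$ can be nonzero only if $l(x,z)\ge m$ and $l(z,y)\ge n$, and concatenating chains from $x$ to $z$ and $z$ to $y$ forces $l(x,y)\ge l(x,z)+l(z,y)\ge m+n$. Hence $(\alpha\beta)_{xy}=0$ whenever $l(x,y)\le m+n-1$. The same calculation, with one factor arbitrary (treating $FI(X,K)$ as $Z_0$), shows that each $Z_k$ is a two-sided ideal of $FI(X,K)$, a fact I will use repeatedly.

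For~(i), the first task is the identification $Z_1=J(FI(X,K))$. This is standard: an element of $FI(X,K)$ is invertible exactly when all its diagonal entries are nonzero, so $\delta+Z_1\subseteq \U(FI(X,K))$ gives $Z_1\subseteq J(FI(X,K))$; conversely, $FI(X,K)/Z_1\cong \prod_{x\in X}K$ is Jacobson semisimple, yielding the reverse inclusion. For the commutator part, a direct check of the diagonal (since the only $z$ with $x\le z\le x$ is $z=x$) gives $[\alpha,\beta]_{xx}=\alpha_{xx}\beta_{xx}-\beta_{xx}\alpha_{xx}=0$, so $[FI(X,K),FI(X,K)]\subseteq Z_1$; combining with the ideal property then gives $FI(X,K)^{(1)}=[FI(X,K),FI(X,K)]\cdot FI(X,K)\subseteq Z_1$.

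For~(iii)--(v), short inductions on $n$ built on (ii) suffice. In~(iii), the base case $\gamma_1(J)=Z_1$ is~(i), and the step uses $[Z_n,Z_1]\subseteq Z_nZ_1+Z_1Z_n\subseteq Z_{n+1}$. In~(iv), the base case $FI(X,K)^{(1)}\subseteq Z_1=Z_{2^0}$ is~(i), and the step uses $[Z_{2^{n-1}},Z_{2^{n-1}}]\cdot FI(X,K)\subseteq Z_{2^n}\cdot FI(X,K)\subseteq Z_{2^n}$. Part~(v) is analogous, with base case $J(FI(X,K))^{(0)}=Z_1=Z_{2^0}$ and step $[Z_{2^n},Z_{2^n}]\cdot FI(X,K)\subseteq Z_{2^{n+1}}\cdot FI(X,K)\subseteq Z_{2^{n+1}}$. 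The only point requiring more than formal manipulation is the identification $J(FI(X,K))=Z_1$ in~(i); everything else is bookkeeping with the length function and the convolution formula, so that is where I expect the author to either cite the literature on finitary incidence algebras or give the short direct argument sketched above.
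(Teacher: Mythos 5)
Your proposal is correct and follows essentially the same route as the paper: part (ii) by the convolution formula \cref{conv-product} together with the subadditivity of interval lengths (the paper states the contrapositive, that $l(x,y)\le m+n-1$ forces $l(x,z)\le m-1$ or $l(z,y)\le n-1$), and parts (iii)--(v) by short inductions built on (i) and (ii). The only difference is cosmetic: for part (i) the paper simply cites \cite[Corollaries 2 and 3]{KN}, whereas you sketch the direct argument, whose key input (invertibility of elements of $FI(X,K)$ with nonzero diagonal, hence $Z_1=J(FI(X,K))$, plus the vanishing of diagonals of commutators) is exactly the content of those cited corollaries.
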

	\begin{proof}
		Assertion \cref{FI^(1)-sst-Z_1} is proved in \cite[Corollaries 2 and 3]{KN}.  For \cref{Z_mZ_n-sst-Z_(m+n)} take $\alpha\in Z_m$ and $\beta\in Z_n$. If $l(x,y)\le m+n-1$, then for any $x\le z\le y$ either $l(x,z)\le m-1$, in which case $\alpha_{xz}=0$, or $l(z,y)\le n-1$, in which case $\beta_{zy}=0$. It follows from \cref{conv-product} that $(\alpha\beta)_{xz}=0$, so $\alpha\beta\in Z_{m+n}$. Items \cref{gamma_n(J(FI)),FI^(n)-sst-Z_(2^(n-1)),J^(n)-sst-Z_(2^n)} are proved by induction on $n$ using \cref{FI^(1)-sst-Z_1,Z_mZ_n-sst-Z_(m+n)}. For instance, let us prove \cref{J^(n)-sst-Z_(2^n)}. We have $J(FI(X,K))^{(0)}=Z_1$ by \cref{FI^(1)-sst-Z_1}. Assuming $J(FI(X,K))^{(n-1)}\su Z_{2^{n-1}}$, thanks to \cref{Z_mZ_n-sst-Z_(m+n)} we obtain
			\begin{align*}
				J(FI(X,K))^{(n)}=[J(FI(X,K))^{(n-1)},J(FI(X,K))^{(n-1)}]J(FI(X,K))\su[Z_{2^{n-1}},Z_{2^{n-1}}]Z_1\su Z_{2^n+1}\su Z_{2^n}.
			\end{align*}
	\end{proof}
	
	Let $K\langle x_1, x_2, \ldots \rangle$ denote the free algebra  on noncommuting indeterminates $x_1,x_2,\ldots$ over the field $K$. We recall that an algebra $A$ over $K$ is said to satisfy a \emph{polynomial identity} if there exists a nonzero $f(x_1,x_2,\ldots,x_n)\in K\langle x_1, x_2, \ldots \rangle$  such that $f(a_1,a_2,\ldots,a_n)=0$, for all $a_1,a_2,\ldots,a_n\in A$. Moreover, a group $G$ is said to satisfy a \emph{group
			identity} whenever there exists a nontrivial word $w(x_1,\ldots,x_n)$ in the free group generated
		by $\{x_1,x_2,\ldots \}$ such that $w(g_1,\ldots,g_n)=1$, for all $g_1,\ldots,g_n\in
		G$.
	
	Finitary incidence algebras satisfying a polynomial identities are characterized in the following result:
	
	\begin{thm} \label{teor}
		Let $(X, \leq)$ be a poset and $K$ a field. The following conditions are equivalent:
		\begin{enumerate}
			\item\label{FI-pol-ident} $FI(X,K)$ satisfies a polynomial identity;
			\item\label{DI-Lie-solv} $FI(X,K)$ is Lie solvable;
			\item\label{FI-str-Lie-solv} $FI(X,K)$ is strongly Lie solvable;
			\item\label{U(FI)-poly-ident} $\U(FI(X,K))$ satisfies a group identity;
			\item\label{U(FI)-solv} $\U(FI(X,K))$ is solvable;
			\item\label{X-bounded} $X$ is bounded.
		\end{enumerate}
	\end{thm}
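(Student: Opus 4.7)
My plan is to establish the cyclic chain
$(\text{vi})\Rightarrow(\text{iii})\Rightarrow(\text{ii})\Rightarrow(\text{i})\Rightarrow(\text{vi})$
together with the parallel chain
$(\text{iii})\Rightarrow(\text{v})\Rightarrow(\text{iv})\Rightarrow(\text{vi})$,
building on \cref{ideals-Z_k,dl(G)-and-dl^Lie(I)}. For $(\text{vi})\Rightarrow(\text{iii})$: if $l(X)=N<\infty$, then $Z_{N+1}=\{0\}$ by the definition of $Z_k$, so \cref{FI^(n)-sst-Z_(2^(n-1))} of \cref{ideals-Z_k} yields $FI(X,K)^{(n)}\subseteq Z_{2^{n-1}}=\{0\}$ for any $n$ with $2^{n-1}\geq N+1$. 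The inclusion $FI(X,K)^{[n]}\subseteq FI(X,K)^{(n)}$ is an easy induction, giving $(\text{iii})\Rightarrow(\text{ii})$; and Lie solvability of class $n$ forces the nontrivial polynomial identity of degree $2^n$ obtained by iterating the Lie bracket $n$ times, which is $(\text{ii})\Rightarrow(\text{i})$. For $(\text{iii})\Rightarrow(\text{v})$, I observe that $\U(FI(X,K))/\U_1\cong\U(D)$ is abelian, where $D:=\prod_{x\in X}Ke_{xx}$ is the (commutative) diagonal subalgebra and $\U_1:=\delta+J(FI(X,K))$, whence $[\U(FI(X,K)),\U(FI(X,K))]\subseteq \delta+J(FI(X,K))$; then \cref{dl(G)-and-dl^Lie(I)} applied with $I=J(FI(X,K))$, combined with the straightforward induction $J(FI(X,K))^{(n)}\subseteq FI(X,K)^{(n)}$, yields solvability of $\U(FI(X,K))$. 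The implication $(\text{v})\Rightarrow(\text{iv})$ is immediate.

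\textbf{Closing the cycles.} For $(\text{i})\Rightarrow(\text{vi})$ and $(\text{iv})\Rightarrow(\text{vi})$ I argue by contraposition. If $X$ is unbounded, then for every $n\geq 1$ a chain $x_0<x_1<\cdots<x_n$ in $X$ produces a (non-unital) subalgebra $\spn\{e_{x_i,x_j}:0\leq i\leq j\leq n\}$ of $FI(X,K)$ isomorphic to the $(n{+}1)\times(n{+}1)$ upper triangular matrix algebra $T_{n+1}(K)$, and a subgroup $\{\delta+v:v\in\spn\{e_{x_i,x_j}:i<j\}\}$ of $\U(FI(X,K))$ isomorphic to the unitriangular group $UT_{n+1}(K)$. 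A PI of $FI(X,K)$ would descend to $T_n(K)$ for every $n$: after extending scalars to an infinite extension of $K$ and multilinearizing, evaluation at matrix units $x_i=e_{\tau(i),\tau(i)+1}$ in $T_{d+1}(K)$ for each permutation $\tau\in S_d$ picks out the coefficient $\alpha_{\tau^{-1}}$ times $e_{1,d+1}$, forcing every coefficient of the multilinearized polynomial to vanish. Similarly, a group identity $w=1$ of $\U(FI(X,K))$ would hold in $UT_n(K)$ for all $n$; but residual nilpotence of the free group $F_k$ shows that a nontrivial $w$ is non-identity in some $F_k/\gamma_{c+1}(F_k)$, a finitely generated torsion-free nilpotent group which embeds into $UT_n(\Z)\subseteq UT_n(K)$ via Malcev's theorem when $\ch{K}=0$, while in characteristic $p>0$ residual $p$-finiteness of $F_k$ (Magnus) produces a finite $p$-group witnessing $w\ne 1$, which embeds as a Sylow $p$-subgroup of $GL_m(\F_p)$, hence into $UT_m(\F_p)\subseteq UT_m(K)$.

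\textbf{Main obstacle.} The closing step is the substantive part, with the group-identity case $(\text{iv})\Rightarrow(\text{vi})$ the most delicate. The PI case is handled cleanly by the matrix-unit coefficient computation once multilinearization is justified (by passing to an infinite field). The group-identity case in positive characteristic requires Magnus' residual $p$-finiteness of free groups together with the Sylow identification of upper unitriangular matrices, replacing Malcev's integral embedding available in characteristic zero.
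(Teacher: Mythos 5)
Your proposal is correct, and its lemma-based half coincides with the paper's argument: (vi)$\Rightarrow$(iii) via \cref{ideals-Z_k}\cref{FI^(n)-sst-Z_(2^(n-1))}, the trivial implications (iii)$\Rightarrow$(ii)$\Rightarrow$(i) and (v)$\Rightarrow$(iv), and (iii)$\Rightarrow$(v) via \cref{dl(G)-and-dl^Lie(I)} --- though the paper applies that lemma more directly, with $I=FI(X,K)$ and $G=\U(FI(X,K))$ (every unit lies in $\delta+FI(X,K)$), so your detour through $\U(FI(X,K))/\U_1\cong\U(D)$ and $J(FI(X,K))^{(n)}\subseteq FI(X,K)^{(n)}$ is sound but unnecessary. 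Where you genuinely diverge is in closing the cycle. For (i)$\Rightarrow$(vi) the paper, following \cite{Spiegel99}, pads the multilinear identity to odd degree $2k-1$ and substitutes the interleaved string $e_{x_1x_1},e_{x_1x_2},e_{x_2x_2},\dots,e_{x_kx_k}$, which isolates the single coefficient $c_1$ and, crucially, yields the sharper conclusion recorded in \cref{remark} (no identity of degree less than $2l(X)+2$), later needed for the exact derived lengths in \cref{cor1}; your all-permutations substitution $x_i\mapsto e_{\tau(i),\tau(i)+1}$ kills every coefficient of a multilinear identity of degree $d$ using a chain of $d+1$ elements, which proves the equivalence but only gives the weaker bound $l(X)+1$ on the minimal identity degree. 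One step you should drop: ``extend scalars to an infinite extension and multilinearize'' is not justified in that order, since a non-multilinear identity over a finite $K$ need not survive scalar extension; no extension is needed at all, because the multilinearization process of \cite[Theorem 1.3.7]{GZ} (which the paper cites) produces a nonzero multilinear consequence over an arbitrary field, and your matrix-unit evaluation then works verbatim --- so this is a removable slip, not a fatal gap. For (iv)$\Rightarrow$(vi) the paper simply defers to the proof of \cite[Lemma 2]{Spiegel2002}, whereas you give a self-contained argument: an unbounded $X$ yields copies of the unitriangular groups $UT_n(K)$ for all $n$, and no nontrivial word is an identity in all of them, by residual torsion-free nilpotence of free groups plus the embedding of finitely generated torsion-free nilpotent groups into $UT_n(\Z)$ in characteristic zero, and by residual $p$-finiteness of free groups plus the identification of $UT_m(\F_p)$ as a Sylow $p$-subgroup of $GL_m(\F_p)$ in characteristic $p$. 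This is correct and makes the note self-contained, at the price of heavier (though standard) group-theoretic machinery than the citation the paper uses.
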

	\begin{proof}
		We first show that \cref{FI-pol-ident} implies \cref{X-bounded}. To this end, we use the idea of the proof of \cite[Theorem 1]{Spiegel99}. Suppose that $FI(X,K)$ satisfies a polynomial identity $f$ of degree $n$. By the multilinearization process (see e.g. \cite[Theorem 1.3.7]{GZ}), we may assume that $f$ is multilinear. Moreover, by possibly multiplying $f$ by new variables, we may assume that $n=2k-1$. Therefore $FI(X,K)$ satisfies an identity of the form
		\begin{equation}\label{multilin}
			f(\chi_1,\chi_2,\ldots , \chi_{2k-1})=\sum_{\sigma \in \mathrm{Sym}_{2k-1}} c_\sigma \chi_{\sigma(1)}\chi_{\sigma(2)}\cdots \chi_{\sigma(2k-1)}
		\end{equation}
		with coefficients $c_\sigma \in K$, where we can assume that $c_1\neq 0$ (here $\mathrm{Sym}_{2k-1}$ denotes the symmetric group of $\{1,\dots,2k-1\}$). Suppose that there exists a chain $x_1 < x_2 < \ldots <x_d$ in $X$ with $d\geq k$. We must have
		$$
		f(e_{x_1x_1},e_{x_1x_2},e_{x_2x_2},\ldots,e_{x_{k-1}x_k},e_{x_kx_k})=c_1e_{x_1x_k}\neq 0,
		$$
		as the only monomial in \cref{multilin} with non-zero evaluation is $\chi_1 \chi_2 \cdots \chi_{2k-1}$. Thus, $f$ is not a polynomial identity for $FI(X,K)$, a contradiction. This shows that $X$ is bounded with $l(X)< k-1$.
		
		\cref{ideals-Z_k}\cref{FI^(n)-sst-Z_(2^(n-1))} shows that \cref{X-bounded} implies \cref{FI-str-Lie-solv}, as $FI(X,K)^{(n)}=\{0\}$ whenever $2^{n-1}\geq l(X)+1$. It is clear that \cref{FI-str-Lie-solv} implies \cref{DI-Lie-solv}, \cref{DI-Lie-solv} implies \cref{FI-pol-ident}, and \cref{U(FI)-solv} obviously implies \cref{U(FI)-poly-ident}. The fact that \cref{U(FI)-poly-ident} implies \cref{X-bounded} can be proved by means of the same proof as in \cite[Lemma 2]{Spiegel2002}. Therefore, in order to complete the proof it is enough to prove that \cref{FI-str-Lie-solv} implies \cref{U(FI)-solv}. For this purpose, apply \cref{dl(G)-and-dl^Lie(I)} with $I=FI(X,K)$ and $G=\U(FI(X,K))$.
	\end{proof}

	\begin{rem}\label{remark} \emph{
			Let $(X,\leq)$ be a bounded poset and $K$ a field. It follows from the proof of Theorem \ref{teor} that $FI(X,K)$ does not satisfy any polynomial identity of degree less than $2l(X)+2$.
		}
	\end{rem}
	
	Some consequences of Theorem \ref{teor} are given. The next result provides the exact value of the Lie derived length of a Lie solvable finitary incidence algebra and of the derived length of $\U(FI(X,K))$ under a mild hypothesis on the ground field. As usual, we denote by $\lceil t \rceil$ the upper integral part of the real number $t$. Moreover, for a prime $p$, the field with $p$ elements is denoted by $\F_p$. 
	
	\begin{cor}\label{cor1} Let $(X,\leq)$ be a poset and $K$ a field. 
		\begin{enumerate}
			\item\label{dl-FI} If $FI(X,K)$ is Lie solvable, then 
			$$
			\dl_{Lie}(FI(X,K))=\dl^{Lie}(FI(X,K))=\lceil \log_2 (l(X)+1) \rceil + 1.
			$$
			\item\label{dl-U(FI)} If $\U(FI(X,K))$ is solvable and $K \neq \F_2$, then 
			$$\dl(\U(FI(X,K)))= \lceil \log_2 (l(X)+1) \rceil + 1.$$
		\end{enumerate}
	\end{cor}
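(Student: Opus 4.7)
The upper bound will be immediate from \cref{ideals-Z_k}\cref{FI^(n)-sst-Z_(2^(n-1))}: setting $n = \lceil \log_2(l(X)+1) \rceil + 1$ gives $2^{n-1} \geq l(X)+1$, whence $Z_{2^{n-1}} = \{0\}$ and $FI(X,K)^{(n)} = \{0\}$. Combined with the inclusion $FI(X,K)^{[n]} \subseteq FI(X,K)^{(n)}$, this bounds both $\dl_{Lie}$ and $\dl^{Lie}$ from above by the stated value. For the matching lower bound on $\dl_{Lie}$, I would fix a chain $x_0 < x_1 < \cdots < x_m$ realizing $m := l(X)$ and consider the idempotent $\epsilon := \sum_{i=0}^m e_{x_i x_i}$. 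The corner $\epsilon FI(X,K) \epsilon$ is a (non-unital) Lie subalgebra of $FI(X,K)$ canonically isomorphic to the upper triangular matrix algebra $UT_{m+1}(K)$, so that $\dl_{Lie}(FI(X,K)) \geq \dl_{Lie}(UT_{m+1}(K))$. The problem would thus reduce to the classical identity $\dl_{Lie}(UT_n(K)) = \lceil \log_2 n \rceil + 1$, which I would verify by induction on $n$: construct an iterated Lie commutator of elementary matrix units $e_{i,i+1}$, organized as a balanced binary tree of bracketings, that evaluates to a nonzero scalar multiple of $e_{1,n}$ at level $\lceil \log_2 n \rceil$ of the Lie derived series.

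\textbf{Part (ii).} The upper bound will follow at once from \cref{dl(G)-and-dl^Lie(I)} applied with $I = FI(X,K)$ and $G = \U(FI(X,K))$ (the inclusion $G \subseteq \delta + I$ being automatic here), yielding $\dl(\U(FI(X,K))) \leq \dl^{Lie}(FI(X,K)) = \lceil \log_2(l(X)+1) \rceil + 1$ by part (i). For the lower bound I would recycle the chain and idempotent $\epsilon$ from part (i). Using $\epsilon(\delta - \epsilon) = (\delta - \epsilon)\epsilon = 0$ and $(\delta - \epsilon)^2 = \delta - \epsilon$, a direct check shows that $u \mapsto u + (\delta - \epsilon)$ is an injective group homomorphism $\U(\epsilon FI(X,K) \epsilon) \hookrightarrow \U(FI(X,K))$. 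Consequently $\dl(\U(FI(X,K))) \geq \dl(\U(UT_{m+1}(K)))$, and it would remain to establish $\dl(\U(UT_n(K))) = \lceil \log_2 n \rceil + 1$ whenever $|K| \geq 3$.

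\textbf{Main obstacle.} The hardest step will be the lower bound $\dl(\U(UT_n(K))) \geq \lceil \log_2 n \rceil + 1$ for $K \neq \F_2$. Here $\U(UT_n(K))$ is a semidirect product of the diagonal torus $(K^*)^n$ with the unipotent subgroup $\U_1 := I_n + N_n$, where $N_n$ denotes the strictly upper triangular matrices; the quotient is abelian, so $\U(UT_n(K))^{(1)} \subseteq \U_1$. The hypothesis $|K| \geq 3$ furnishes some $c \in K^* \setminus \{1\}$, and computing commutators of the form $(\mathrm{diag}(1,\dots,c,\dots,1), \, I_n + e_{ij})$ produces every elementary unitriangular matrix, so in fact $\U(UT_n(K))^{(1)} = \U_1$. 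This accounts for the extra ``$+1$'' that breaks down when $K = \F_2$, since there the diagonal collapses and $\U(UT_n(\F_2)) = \U_1$ has derived length exactly $\lceil \log_2 n \rceil$. A final construction of nested group commutators inside $\U_1$, mirroring the Lie-theoretic bracket pattern from part (i), would give $\dl(\U_1) = \lceil \log_2 n \rceil$, which combines with the one-step delay above to yield the claim.
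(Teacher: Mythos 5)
Your proposal is correct, but it reaches the two lower bounds by a genuinely different and more self-contained route than the paper. For the lower bound in \cref{dl-FI}, the paper does not touch triangular matrices at all: it combines the general fact that a Lie solvable algebra of derived length $m$ satisfies a polynomial identity of degree $2^m$ with \cref{remark} (no identity of degree $<2l(X)+2$, extracted from the proof of \cref{teor}), which immediately forces $2^{m-1}\ge l(X)+1$. You instead take a chain realizing $l(X)$ (legitimate, since boundedness makes the supremum an attained integer), observe that the corner $\epsilon FI(X,K)\epsilon$ is a Lie subalgebra isomorphic to $UT_{m+1}(K)$, and compute $\dl_{Lie}(UT_{m+1}(K))$ by explicit balanced brackets; this works in every characteristic since the structure constants involved are $\pm1$ (one small inaccuracy: the balanced tree on superdiagonal units lands on $e_{1,2^{k-1}+1}$ rather than $e_{1,n}$, but any nonzero element of the relevant derived term suffices). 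For \cref{dl-U(FI)}, the paper simply cites [DHR, Theorem 22 and Lemma 28(g)] for $FI(C,K)$ with $C$ a maximal chain, whereas you re-derive the needed facts by hand: the embedding $u\mapsto u+(\delta-\epsilon)$ of $\U(UT_{m+1}(K))$ into $\U(FI(X,K))$ is correct, the computation $(\mathrm{diag},\,I+\mu e_{ij})=I+(1-c^{-1})\mu e_{ij}$ with $c\ne 1$ does give $\U(UT_n(K))^{(1)}=\U_1$ (this is exactly where $K\ne\F_2$ enters, matching \cref{remark1}), and the group-commutator identity $(I+e_{ij},I+e_{jk})=I+e_{ik}$ makes the balanced-tree argument for $\dl(\U_1)\ge\lceil\log_2 n\rceil$ go through. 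The trade-off: the paper's argument is shorter and leans on \cref{remark} and on DHR, while yours replaces those inputs with elementary matrix computations (still only sketched, but standard and correct), at the cost of redoing classically known facts about $UT_n(K)$ and its unit group.
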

	\begin{proof} 
		\cref{dl-FI}. As $FI(X,K)$ is Lie solvable, $X$ is bounded by \cref{teor}. Therefore, by \cref{ideals-Z_k}\cref{FI^(n)-sst-Z_(2^(n-1))} we have  $FI(X,K)^{(n)}=\{0\}$ whenever $2^{n-1}\geq l(X)+1$, hence $\dl^{Lie}(FI(X,K))\leq \lceil \log_2 (l(X)+1) \rceil + 1$. On the other hand, as a Lie solvable algebra of derived length $m$ satisfies a polynomial identity of degree $2^m$, it follows from \cref{remark} that $FI(X,K)^{[n]}\neq \{0\}$ for every $n$ such that $2^{n-1}< l(X)+1$. Hence $\dl_{Lie}(FI(X,K))\ge \lceil \log_2 (l(X)+1) \rceil + 1$, yielding the first part of the statement. 
		
		\cref{dl-U(FI)}. 
		We have $\dl(\U(FI(X,K))) \leq \lceil \log_2 (l(X)+1) \rceil + 1$ by \cref{dl(G)-and-dl^Lie(I),dl-FI}.
		Let $C$ be a chain of $X$ with $\vert C \vert=l(X)+1$.  Since $\vert K \vert >2$, it follows from \cite[Theorem 22 and Lemma 28(g)]{DHR} applied to $FI(C,K)$ that $\U(FI(X,K))^{(n)}\supseteq \U(FI(X,C))^{(n)}\neq \{\delta\}$ whenever $2^{n-1}< l(X)+1$. We conclude that $\dl(\U(FI(X,K)))\geq \lceil \log_2 (l(X)+1) \rceil +1$, and the result follows. 
	\end{proof}
	
	\begin{rem}\label{remark1}\emph{
			The ground field $\F_2$ was correctly omitted in  \cref{cor1}\cref{dl-U(FI)}. For instance, if $X=\{1,2,\ldots,n\}$ with  the usual order, then  $\U(FI(X,\F_2))$ is isomorphic to the group $UT(n,\F_2)$ of all $n\times n$ upper unitriangular matrices over $\F_2$. It is well-known that the derived length of $UT(n,\F_2)$ is $\lceil \log_2 n \rceil$.}
	\end{rem}
	
	Following \cite{DHR}, given a poset $(X,\leq)$ and a field $K$,  we set
	$$
	\U_1=\delta+J(FI(X,K))=\{f\in FI(X,K) \mid \alpha_{xx}=1 \textrm{ for all } x\in X\}.
	$$
	
	Solvability and nilpotency of $\U_1$ were characterized in \cite[Theorem 41]{DHR}. In our next result we provide an alternative, short proof of the just mentioned theorem, together with the exact value of the nilpotency class and the derived length of $\U_1$:
	
	\begin{cor}\label{cornilp}
		Let $(X,\leq)$ be a poset and $K$ a field. Then the following conditions are equivalent:
		\begin{enumerate}
			\item\label{U_1-solv} $\U_1$ is solvable;
			\item\label{U_1-nilp} $\U_1$ is nilpotent;
			\item\label{X-bounded-for-U_1} $X$ is bounded.
		\end{enumerate} 
		Moreover, in this case $\U_1$ has nilpotency class $l(X)$ and derived length $\lceil \log_2 (l(X)+1) \rceil$. 
	\end{cor}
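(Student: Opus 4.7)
The plan is to handle the three equivalences together with the sharp numerical bounds simultaneously, relying on the filtration $(Z_k)_{k\ge 1}$ introduced before \cref{ideals-Z_k}. Set $\ell:=l(X)$ and $J:=J(FI(X,K))$; when $\ell<\infty$, every interval in $X$ has length at most $\ell$, so $Z_{\ell+1}=\{0\}$.

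For \cref{X-bounded-for-U_1}$\Rightarrow$\cref{U_1-nilp} (with the upper bound $\ell$ on the nilpotency class), I would show by induction on $n$ that
\[
\gamma_n(\U_1)\subseteq\delta+Z_n.
\]
The base case $n=1$ is $\U_1=\delta+Z_1$. For the inductive step, given $g\in\gamma_{n-1}(\U_1)\subseteq\delta+Z_{n-1}$ and $h\in\U_1=\delta+Z_1$, I would expand $(g,h)=\delta+[g^{-1},h^{-1}]gh$ as in the proof of \cref{dl(G)-and-dl^Lie(I)} and observe that $g^{-1},h^{-1}$ lie in $\delta+Z_{n-1}$ and $\delta+Z_1$ respectively, each $\delta+Z_k$ being a subgroup of $\U_1$ thanks to \cref{ideals-Z_k}\cref{Z_mZ_n-sst-Z_(m+n)}. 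Hence $[g^{-1},h^{-1}]\in[Z_{n-1},Z_1]\subseteq Z_n$, and multiplying by $gh\in\delta+Z_1$ keeps the product inside $Z_n$. Since $\delta+Z_n$ is a subgroup, this closes the induction. Taking $n=\ell+1$ yields $\gamma_{\ell+1}(\U_1)=\{\delta\}$, so $\U_1$ has nilpotency class at most $\ell$.

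The implication \cref{U_1-nilp}$\Rightarrow$\cref{U_1-solv} is immediate, while \cref{U_1-solv}$\Rightarrow$\cref{X-bounded-for-U_1} I would prove contrapositively: if $X$ is unbounded, I pick for each $n$ a chain $C_n\subseteq X$ with $|C_n|=n+1$; the subalgebra of $FI(X,K)$ supported on $C_n$ is isomorphic to the upper triangular matrix algebra $T_{n+1}(K)$, so its unipotent subgroup $UT(n+1,K)$ embeds in $\U_1$ and has derived length $\lceil\log_2(n+1)\rceil$, unbounded as $n\to\infty$.

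For the exact numerical values, the lower bound on the nilpotency class comes from the embedding $UT(\ell+1,K)\hookrightarrow\U_1$, since $UT(\ell+1,K)$ has class exactly $\ell$. For the derived length, combining \cref{ideals-Z_k}\cref{J^(n)-sst-Z_(2^n)} with \cref{dl(G)-and-dl^Lie(I)} applied to $I=J$ and $G=\U_1$ gives $\dl(\U_1)\le\lceil\log_2(\ell+1)\rceil$, and the same embedding $UT(\ell+1,K)\hookrightarrow\U_1$, whose derived length equals $\lceil\log_2(\ell+1)\rceil$ over every field, supplies the matching lower bound. I expect the main delicacy to be the first induction: one has to verify that each $\delta+Z_n$ is indeed a subgroup of $\U_1$ and that the commutators produced really generate $\gamma_n(\U_1)$ as a group; once this is in place, everything else reduces either to results already established earlier in the paper or to classical facts about unitriangular groups.
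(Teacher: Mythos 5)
Your proposal is correct, but it takes a genuinely different route from the paper's proof. The paper proves \cref{U_1-solv}$\Rightarrow$\cref{X-bounded-for-U_1} by identifying $\U_1$ with the adjoint group of the radical ring $J(FI(X,K))$ and invoking \cite[Theorem A]{AS} to get Lie solvability of $J(FI(X,K))$, hence of $FI(X,K)$ by \cref{ideals-Z_k}\cref{FI^(1)-sst-Z_1}, and then \cref{teor}; it obtains nilpotency of $\U_1$ and its exact class from the Lie nilpotency class of $J(FI(X,K))$ (via \cref{ideals-Z_k}\cref{gamma_n(J(FI))} and \cref{remark}) combined with the correspondence of \cite{D}; and for the lower bound on the derived length it splits into the cases $K\neq\F_2$ (via \cref{cor1}\cref{dl-U(FI)}, which rests on \cite{DHR}) and $K=\F_2$ (via \cref{remark1}). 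You instead argue entirely inside the group: your induction $\gamma_n(\U_1)\subseteq\delta+Z_n$ replaces the appeal to \cite{D}, and the embedded unitriangular groups replace both \cite{AS} (unbounded chains yield subgroups of unbounded derived length, contradicting solvability) and the case split at $\F_2$, since $UT(m,K)$ has class exactly $m-1$ and derived length exactly $\lceil\log_2 m\rceil$ over every field. This buys a more elementary, self-contained and field-uniform argument; the paper's route is shorter on the page because it delegates to the literature, and it records the extra fact that the Lie nilpotency class of $J(FI(X,K))$ is $l(X)$. Three small points to tidy up: the subgroup property of $\delta+Z_k$ uses that $Z_k$ is an ideal contained in $Z_1=J(FI(X,K))$ (closure under inverses does not follow from \cref{ideals-Z_k}\cref{Z_mZ_n-sst-Z_(m+n)} alone, though it is immediate from $(\delta+z)^{-1}=\delta-z(\delta+z)^{-1}$); the embedding of $UT(n+1,K)$ should be written as $I+N\mapsto\delta+\nu$, extending the diagonal by $\delta$, since the unipotent elements of the corner subalgebra supported on the chain are not themselves units of $FI(X,K)$; and the classical facts about the class and derived length of $UT(m,K)$ over an arbitrary field, on which both of your lower bounds rely, deserve a reference, as the paper only records the $\F_2$ case in \cref{remark1}.
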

	\begin{proof}
		Suppose that $\U_1$ is solvable. Note that $\U_1$ is isomorphic to the adjoint group $(J(FI(X,K)),\circ)$ of $J(FI(X,K))$, where $f\circ g=f+g+fg$ for all $f,g\in J(FI(X,K))$. Therefore, by \cite[Theorem A]{AS} we have that 
		$J(FI(X,K))$ is Lie solvable. As a consequence, by \cref{ideals-Z_k}\cref{FI^(1)-sst-Z_1} we deduce that $FI(X,K)$ is Lie solvable, and so \cref{teor} implies that $X$ is bounded. Hence \cref{U_1-solv} implies \cref{X-bounded-for-U_1}, and \cref{U_1-nilp} obviously implies \cref{U_1-solv}. Now, if $X$ is bounded, then $J(FI(X,K))$ is Lie nilpotent by \cref{ideals-Z_k}\cref{gamma_n(J(FI))}, so that $\U_1$ is nilpotent by \cite[Corollary]{D}. Therefore the three conditions of the statement are equivalent.
		
		Finally, it follows from \cref{ideals-Z_k}\cref{gamma_n(J(FI))} and \cref{remark} that the Lie nilpotency class of $J(FI(X,K))$ is $l(X)$. By \cite[Corollary]{D}, this value coincides with the nilpotency class of $\U_1$. By applying \cref{dl(G)-and-dl^Lie(I)} to $I=J(FI(X,K))$ and $G=\U_1$, we obtain that $\dl(\U_1)\le\dl^{Lie}(J(FI(X,K)))$. It follows from \cref{ideals-Z_k}\cref{J^(n)-sst-Z_(2^n)} that $\dl^{Lie}(J(FI(X,K)))\leq \lceil \log_2 (l(X)+1) \rceil$. Now, if $K\neq \F_2$ then note that $\U(FI(X,K))^{(1)}\su\U_1$, so $\dl(\U_1)\ge\dl(\U(FI(X,K)))-1=\lceil \log_2 (l(X)+1) \rceil$. On the other hand, if $K= \F_2$, then we have $\U(FI(X,K))=\U_1$. Take a chain $C$ in $X$ with $\vert C \vert=l(X)+1$. Then \cref{remark1} allows to conclude that $\dl(\U_1)\ge\dl(\U(FI(C,K)))=\lceil \log_2 (l(X)+1) \rceil$, and the assertion follows. 
	\end{proof}   
	
	Since $\U(FI(X,\F_2))=\U_1$, we have the following result complementing \cref{cor1}\cref{dl-U(FI)}.
	
	\begin{cor}\label{dl(U)-for-K=F_2}
		If $\U(FI(X,\F_2))$ is solvable, then 
		$$\dl(\U(FI(X,\F_2)))= \lceil \log_2 (l(X)+1) \rceil.$$
	\end{cor}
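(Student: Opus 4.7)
The proposal is to reduce the statement immediately to \cref{cornilp} via the identification $\U(FI(X,\F_2))=\U_1$, which is the observation already flagged in the sentence preceding the corollary. The key point is the standard description of units in a finitary incidence algebra: an element $\alpha\in FI(X,K)$ is invertible if and only if every diagonal coefficient $\alpha_{xx}$ is invertible in $K$. Over $K=\F_2$ the only invertible scalar is $1$, so every unit necessarily satisfies $\alpha_{xx}=1$ for all $x\in X$. Hence
\[
\U(FI(X,\F_2))=\delta+J(FI(X,\F_2))=\U_1.
\]

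With this identification in hand, the rest is an invocation of previously established results. First I would apply \cref{teor} to the hypothesis that $\U(FI(X,\F_2))$ is solvable, concluding that $X$ is bounded; this ensures the quantity $\lceil\log_2(l(X)+1)\rceil$ is finite and legitimate. Then I would quote the derived-length portion of \cref{cornilp}, which asserts that whenever $X$ is bounded one has $\dl(\U_1)=\lceil\log_2(l(X)+1)\rceil$. Combining this equality with $\U(FI(X,\F_2))=\U_1$ gives the desired formula.

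There is essentially no obstacle: the nontrivial combinatorial and Lie-theoretic work (the upper bound via $Z_k$-filtration in \cref{ideals-Z_k}, the Amberg--Sysak-type bridge between adjoint groups and Lie solvability, and the lower bound for derived length via restriction to a maximal chain in $\F_2$ as in \cref{remark1}) has already been packaged inside \cref{cornilp}. The only thing this corollary adds is to recognize that over $\F_2$ the whole unit group coincides with $\U_1$, so that the formula of \cref{cornilp} transfers verbatim. Accordingly, the proof reduces to the two-line argument above and need not repeat any of the previous estimates.
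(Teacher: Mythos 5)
Your argument is correct and is exactly the paper's: the corollary is derived, with no further work, from the identification $\U(FI(X,\F_2))=\U_1$ (all diagonal entries of a unit must equal $1$ over $\F_2$) together with the derived-length formula and solvability characterization already established in \cref{cornilp}.
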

	
	We conclude with the following
	
	\begin{cor}\label{nilpotency-of-FI} Let $(X,\leq)$ be a poset and $K$ a field. Then 
		\begin{enumerate}
			\item\label{FI-Lie-nilp} $FI(X,K)$ is Lie nilpotent if and only if $X$ is an antichain;
			\item\label{U(FI)-nilp} $\U(FI(X,K))$ is nilpotent if and only if either $X$ is antichain or $K=\F_2$ and $X$ is bounded. 
		\end{enumerate}
	\end{cor}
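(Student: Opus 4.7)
My plan is to tackle the two parts separately, noting that both directions in each case lean on the machinery already built: \cref{teor}, \cref{cornilp} and the filtration lemma \cref{ideals-Z_k}.

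For part \cref{FI-Lie-nilp}, the easy direction is observing that if $X$ is an antichain, then the defining condition $x\le y$ forces $x=y$, so $FI(X,K)\cong \prod_{x\in X} K$ is commutative and Lie nilpotent of class $1$. For the converse I plan to argue by contrapositive: assume $X$ has two comparable distinct points $x<y$. The key computation is $e_{xx}e_{xy}=e_{xy}$ and $e_{xy}e_{xx}=0$, giving $[e_{xx},e_{xy}]=e_{xy}$. An easy induction then shows $e_{xy}\in\gamma_n(FI(X,K))$ for every $n\ge 1$, because $e_{xy}=[e_{xx},e_{xy}]\in [FI(X,K),\gamma_n(FI(X,K))]=\gamma_{n+1}(FI(X,K))$. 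Hence $FI(X,K)$ is not Lie nilpotent.

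For part \cref{U(FI)-nilp}, one implication is immediate: if $X$ is an antichain, $\U(FI(X,K))$ is abelian, and if $K=\F_2$ and $X$ is bounded, then every diagonal entry is forced to be $1$, so $\U(FI(X,K))=\U_1$ is nilpotent by \cref{cornilp}. The converse requires more work. Assuming $\U(FI(X,K))$ is nilpotent (hence solvable), \cref{teor} gives that $X$ is bounded, so I only need to show that if $K\ne\F_2$, then $X$ must be an antichain. Again I proceed by contrapositive: if there exist $x<y$, I pick $\lambda\in K\setminus\{0,1\}$ and set $v=\lambda e_{xx}+\sum_{z\ne x}e_{zz}\in\U(FI(X,K))$, with inverse $\lambda^{-1}e_{xx}+\sum_{z\ne x}e_{zz}$. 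The main calculation, using $e_{xy}^2=0$, will show that for any $c\in K$,
\begin{align*}
 (\delta+c\, e_{xy},\,v)=\delta+c(\lambda^{-1}-1)e_{xy}.
\end{align*}
Setting $u_0=\delta+e_{xy}$ and $u_{n+1}=(u_n,v)$, an induction then gives $u_n=\delta+(\lambda^{-1}-1)^n e_{xy}\in \gamma_{n+1}(\U(FI(X,K)))$, which is non-trivial for every $n$ since $\lambda^{-1}-1\ne 0$. This contradicts nilpotency and finishes the argument.

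The main obstacle is the conjugation computation producing the factor $\lambda^{-1}-1$; everything else is either a direct consequence of results already in the paper or a short direct verification. The role of the hypothesis $K\ne\F_2$ is precisely the availability of $\lambda\ne 0,1$, which explains why the statement bifurcates exactly at $K=\F_2$.
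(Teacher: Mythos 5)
Your proposal is correct, and its overall architecture coincides with the paper's: the easy implications (antichain $\Rightarrow$ commutative; $K=\F_2$ and $X$ bounded $\Rightarrow$ $\U(FI(X,K))=\U_1$ nilpotent by \cref{cornilp}), the single-commutator argument for the converse of \cref{FI-Lie-nilp} (the paper uses $[e_{xy},e_{yy}]=e_{xy}$, you use $[e_{xx},e_{xy}]=e_{xy}$ --- the same idea), and the use of \cref{teor} to get boundedness from nilpotency of the unit group. The one genuine difference is how you rule out a pair $x<y$ when $K\neq\F_2$: the paper passes to a maximal chain $C$, identifies $\U(FI(C,K))$ with the group of invertible upper triangular matrices and invokes the known fact that this group is not nilpotent over a field with more than two elements (which tacitly uses the embedding of $\U(FI(C,K))$ into $\U(FI(X,K))$ by extending units by the identity on the rest of the diagonal), whereas you verify non-nilpotency directly inside $\U(FI(X,K))$ via the iterated commutators $(\delta+c\,e_{xy},v)=\delta+c(\lambda^{-1}-1)e_{xy}$ with $v=\lambda e_{xx}+\sum_{z\neq x}e_{zz}$, $\lambda\neq 0,1$; your computation checks out ($e_{xy}^2=0$, $v^{-1}e_{xy}v=\lambda^{-1}e_{xy}$), and it gives nontrivial elements of $\gamma_{n+1}(\U(FI(X,K)))$ for every $n$. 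What your route buys is self-containedness: no appeal to the structure of triangular matrix groups, no embedding of the chain subalgebra, and in fact no need for boundedness at that step --- two comparable elements and $|K|>2$ suffice. What the paper's route buys is brevity, delegating the computation to a standard cited fact. Both are complete proofs.
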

	\begin{proof}
		\cref{FI-Lie-nilp}. If $X$ is an antichain, then $FI(X,K)$ is commutative, hence Lie nilpotent. Conversely, suppose that $X$ is not an antichain and let $x,y\in X$ with $x<y$. Then $[e_{xy},e_{yy}]=e_{xy}$, hence $FI(X,K)$ is not Lie nilpotent.
		
		\cref{U(FI)-nilp}. Suppose that $\U(FI(X,K))$ is nilpotent. Then $X$ is bounded by \cref{teor}. Assume $K \neq \F_2$. If $X$ were not an antichain, then we could find a chain $C$ in $X$ with $\vert C \vert=n>1$. As $\U(FI(C,K))$ is isomorphic to the group of all invertible $n\times n$ upper triangular matrices over $K$, it follows that $\U(FI(X,K))$ is not nilpotent, a contradiction. This proves necessity. 
		
		Let us prove sufficiency. Clearly, $\U(FI(X,K))$ is abelian when $X$ is an antichain. On the other hand, if $K=\F_2$, then $\U(FI(X,K))=\U_1$, which is nilpotent by \cref{cornilp} provided that $X$ is bounded.
	\end{proof}
	
	\section*{Acknowledgements}
	
	Mykola Khrypchenko was partially supported by CMUP, member of LASI, which is financed by national funds through FCT --- Fundação para a Ciência e a Tecnologia, I.P., under the project with reference UIDB/00144/2020. Salvatore Siciliano is a member of the "National Group for Algebraic and Geometric Structures, and their applications" (GNSAGA-INdAM). The authors are grateful to the referee for pointing out misprints and other helpful comments that improved the paper.
	
	\bibliography{bibl}{}
	\bibliographystyle{acm}

\end{document}